\newtheorem{theorem}{Theorem}[section]
\newtheorem{corollary}[theorem]{Corollary}
\newtheorem{lemma}[theorem]{Lemma}
\theoremstyle{definition}
\newtheorem{example}[theorem]{Example}
\theoremstyle{remark}
\title{On the warping sum of knots}
\author{Slavik Jablan\footnote{1952--2015}, 
Ayaka Shimizu \thanks{Department of Mathematics, National Institute of Technology, Gunma College, 580 Toriba-cho, Maebashi-shi, Gunma 371-8530, Japan. Email: shimizu@nat.gunma-ct.ac.jp }}
\date{\today}
\begin{document}

\maketitle

\begin{abstract}
The warping sum $e(K)$ of a knot $K$ is the minimal value of the sum of the warping degrees of a minimal diagram of $K$ with both orientations. 
In this paper, knots $K$ with $e(K) \le 3$ are characterized, and some knots $K$ with $e(K)=4$ are given. 
\end{abstract}

\section{Introduction}

In this paper, knot diagrams are oriented and on $S^2$, and they are considered up to ambient isotopy of $S^2$. 
An oriented knot diagram $D$ is {\it monotone} if one can travel along $D$ from a point on $D$ so that one meets each crossing as an over-crossing first. 
The {\it warping degree} of a knot diagram $D$, denoted by $d(D)$, is the smallest number of crossing changes required to obtain a monotone diagram from $D$ (defined by Kawauchi in \cite{AK}). 
The warping degree of an oriented knot diagram is dependent on both the diagram and the orientation, so it is not a knot invariant. 
On the other hand, this degree can be used to study a knot's orientation, its alternating behavior, its crossing number, and other properties (\cite{KS-ori, AS-k, AS-m}). 
The warping degree has been also defined and studied for links and spatial graphs (\cite{AK, AK-sg}), nanowords (\cite{TF}), virtual knot diagrams (\cite{OSY}) and so on. 
Similar concepts have been studied from various view points (see, for example, \cite{JH, LM}). 
In particular, the warping degree relates to a knot invariant called the {\it ascending number} of a knot (\cite{MO}, see also \cite{fujimura, fung, SJ, okuda}); 
the minimal warping degree $d(D)$ for all diagrams $D$ with an orientation of an unoriented knot $K$ is the ascending number $a(K)$ of $K$. 

For a knot diagram $D$, let $-D$ denote the diagram $D$ with its orientation reversed. 
The {\it warping sum} $e(D)$ of a knot diagram $D$ is the sum $d(D)+d(-D)$ of warping degrees of $D$ and $-D$. 
By definition, we have $e(-D)=e(D)$ and hence $e(D)$ is not orientation-dependent. 
We remark that $e(D)$ relates to the {\it span}, $\mathrm{spn}(D)$, of  the ``warping polynomial'' of a knot diagram $D$. 
We have the equality $\mathrm{spn}(D)=c(D)-e(D)$ (\cite{AS-p}), where $c(D)$ is the crossing number of $D$. 
(For the knot invariant $\mathrm{spn}(K)$, see also \cite{AL, CM}.)
For an (oriented or unoriented) knot $K$, the {\it warping sum} of the knot $K$, denoted by $e(K)$, is defined to be the minimal value of $e(D)$ for all {\it minimal diagrams} $D$ of $K$. 
In \cite{AS-k}, the inequality $e(K) \le c(K)-1$ is proven, giving an upper bound on $e(K)$. 
Furthermore, it is shown that equality holds if and only if $K$ is a prime alternating knot. 
In this paper, we provide a lower bound for $e(K)$. 
In particular, we show that any knot $K$ which is neither the trivial knot, $3_1$ nor $4_1$ has $e(K)\ge 4$ even if $K$ is not prime alternating (Theorem \ref{e-thm}). 
From this theorem, we determine some knots $K$ with $e(K)=4$. 
The rest of the paper is organized as follows: 
In Section 2, we investigate the warping sum $e(K)$, and we determine which knots have $e=0,1,2$ or $3$ by considering another new invariant $md(K)$, called the minimal warping degree. 
Then we give some knots $K$ with $e(K)=4$. 
In Section 3, we generalize the warping sum $e(K)$ to define a related invariant $\hat{e}(K)$ by considering all diagrams of $K$, not only minimal diagrams and we determine which knots have $\hat{e}=0,1,2$ or $3$.

\section{The warping sum $e(K)$}

In this section, we study the warping sum $e(K)$ and we give some knots $K$ with $e(K)=4$. 
In \cite{AS-k}, the following inequality is shown: 
\phantom{x}
\begin{theorem}[\cite{AS-k}]
For a nontrivial knot $K$, we have 
$$e(K) \le c(K)-1,$$
where the equality holds if and only if $K$ is a prime alternating knot. 
\label{ec-thm}
\end{theorem}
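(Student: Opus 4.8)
The plan is to control how the warping degree depends on the choice of base point. For a diagram $D$ with $c=c(D)$ crossings and an edge $e$ of $D$ (an arc between two consecutive crossings), write $d_e(D)$ for the warping degree computed from a base point on $e$; one readily checks that $d_e(D)$ equals the number of crossings of $D$ that one meets as an under-crossing first when starting from $e$, so that $d(D)=\min_e d_e(D)$. The first step is the local lemma that pushing the base point across a single crossing $v$, from an edge $e$ to the adjacent edge $e'$, changes the warping degree by exactly $+1$ if the strand running $e\to v\to e'$ passes over at $v$, and by $-1$ if it passes under: moving the base point past $v$ only moves one of the two passages through $v$ from the front of the traversal to the back, so the status of $v$ flips while every other crossing keeps the relative order of its two passages, and a crossing change at $v$ affects nothing else. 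Hence, as the base point travels once around $D$, the numbers $d_e(D)$ trace out a closed walk on $\mathbb{Z}$ of length $2c$ with steps $\pm1$.

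Combine this with the pointwise identity $d_e(D)+d_e(-D)=c(D)$: at each crossing the under-passage is either the first or the second of the two passages from the given base point, so the crossing is a warping crossing for exactly one of $D,-D$. Thus $\min_e d_e(-D)=c(D)-\max_e d_e(D)$, and therefore $e(D)=\min_e d_e(D)+\min_e d_e(-D)=c(D)-\mathrm{spn}(D)$, where $\mathrm{spn}(D)=\max_e d_e(D)-\min_e d_e(D)$ is the span of the warping polynomial; this is precisely the equality $\mathrm{spn}(D)=c(D)-e(D)$ of \cite{AS-p}. Because any diagram with at least one crossing produces a nonconstant closed $\pm1$-walk of length $2c\ge2$, it has $\mathrm{spn}(D)\ge1$, so $e(D)\le c(D)-1$ for every diagram of a nontrivial knot; minimizing over minimal diagrams gives $e(K)\le c(K)-1$.

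For the equality case I would first analyze it at the level of a single diagram: $e(D)=c(D)-1$ if and only if $\mathrm{spn}(D)=1$, if and only if the closed $\pm1$-walk takes only two consecutive values, which forces it to alternate $+1,-1,+1,\dots$; by the sign rule above this happens exactly when one meets crossings alternately over and under along the traversal, i.e.\ exactly when $D$ is an alternating diagram. Since $e(D)=c(K)-\mathrm{spn}(D)$ for a minimal diagram $D$ of $K$, we get $e(K)=c(K)-\max_D\mathrm{spn}(D)$ over minimal diagrams $D$, so $e(K)=c(K)-1$ if and only if every minimal diagram of $K$ is alternating. It then remains to show that this last condition is equivalent to $K$ being a prime alternating knot. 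The ``if'' direction uses the resolution of the Tait conjectures (Kauffman--Murasugi--Thistlethwaite and Menasco--Thistlethwaite): a prime alternating knot has only reduced alternating minimal diagrams. For the ``only if'' direction, if every minimal diagram of $K$ is alternating then $K$ is alternating, and one must rule out $K$ being composite — note that if $K$ were non-alternating there is nothing to rule out, since then no diagram is alternating at all, every minimal diagram has $\mathrm{spn}\ge2$, and $e(K)\le c(K)-2$.

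The main obstacle is precisely this composite case: for a composite (hence, here, alternating) knot $K=K_1\#K_2$ one must exhibit a minimal diagram with $\mathrm{spn}\ge2$, equivalently a non-alternating minimal diagram. The naive connected sum $D_1\#D_2$ of reduced alternating diagrams of the factors is itself alternating, so a genuinely different diagram is needed. The idea is to re-route a connected-sum diagram, without increasing the crossing number, so that the crossings coming from the two factors interleave — for instance by presenting $K_1\#K_2$ as a braid closure and conjugating by a generator chosen to cancel against an end of one factor (for instance, the closures of $\sigma_1^{3}\sigma_2^{3}$ and of $\sigma_2\sigma_1^{3}\sigma_2^{2}$ are both $6$-crossing diagrams of the granny knot, the second one non-alternating), or by an ambient isotopy of $S^{2}$ sliding one summand past part of the other. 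The resulting diagram still has $c(K_1)+c(K_2)=c(K)$ crossings — minimal by additivity of the crossing number for alternating knots — yet one checks that two consecutive passages along the traversal are now both over-crossings (or both under-crossings), so the diagram is non-alternating and $\mathrm{spn}\ge2$, whence $e(K)\le c(K)-2$. Arranging such an interleaving for an arbitrary composite knot while keeping the diagram minimal is the step I expect to require the most care.
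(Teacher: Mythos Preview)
This theorem is not proved in the present paper; it is quoted from \cite{AS-k}, so there is no in-paper argument to compare against. Your outline matches the ingredients that \cite{AS-k} and \cite{AS-p} supply: the identity $e(D)=c(D)-\mathrm{spn}(D)$, the characterization $\mathrm{spn}(D)=1$ iff $D$ is alternating via the closed $\pm1$-walk of base-point warping degrees, and the appeal to the Tait conjectures for the ``if'' direction.

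Your last paragraph, however, rests on a false premise that makes the composite case look harder than it is. You assert that ``the naive connected sum $D_1\#D_2$ of reduced alternating diagrams of the factors is itself alternating,'' but this depends on which edges of $D_1$ and $D_2$ are cut. For a fixed orientation on $K=K_1\#K_2$ one can always choose the cut edge on $D_2$ so that the first crossing encountered upon entering $D_2$ has the \emph{same} over/under type as the last crossing just passed in $D_1$; the resulting $D_1\#D_2$ then has two consecutive overs at one splice arc (and two consecutive unders at the other), hence is non-alternating. It still has $c(K_1)+c(K_2)=c(K)$ crossings and still represents $K$, since the oriented connected sum of knots is independent of the cut point. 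No braid conjugation or re-routing is needed; indeed your own granny-knot example already exhibits this, as the closure of $\sigma_1^{3}\sigma_2^{3}$ is itself a non-alternating $6$-crossing diagram (trace strand~$1$: over, under, over, over, under, over).
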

\phantom{x}

\noindent Since any minimal diagram of a prime alternating knot is a reduced alternating diagram, and the equality $e(D)=c(D)+1$ holds for any nontrivial alternating diagram of a knot (\cite{AS-k}), any minimal diagram $D$ of a nontrivial prime alternating knot $K$ has $e(D)=c(K)-1$. 
However, different minimal diagrams of the same prime alternating knot can give different breakdown of the warping degree $d(D)$ and $d(-D)$ as shown in Examples \ref{ex-7-6} and \ref{ex-8-12}. 

\phantom{x}
\begin{example}
Two minimal diagrams of $7_6$, $D_1$ and $D_2$ depicted in Figure \ref{7-6} have $d(D_1, -D_1)=(3,3)$ and $d(D_2, -D_2)=(2,4)$, where we denote the pair $( d(D), d(-D) )$ by $d(D, -D)$. 
Hence $d(D)$ is not preserved by some flype moves although $e(D)$ is preserved by flypes on reduced alternating diagrams of a prime alternating knot; $e(D_1)=e(D_2)=6$. 
\begin{figure}[ht]
\begin{center}
\includegraphics[width=70mm]{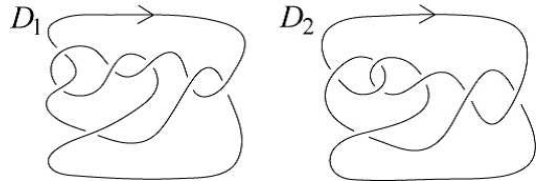}
\caption{Minimal diagrams $D_1$ and $D_2$ of $7_6$. }
\label{7-6}
\end{center}
\end{figure}
\label{ex-7-6}
\end{example}
\phantom{x} 

\begin{example}
Two minimal diagrams $D_1$ and $D_2$ of $K=8_{12}$ depicted in Figure \ref{8-12} have $d(D_1, -D_1)=(3,4)$ and $d(D_2, -D_2)=(2,5)$. 
\begin{figure}[ht]
\begin{center}
\includegraphics[width=70mm]{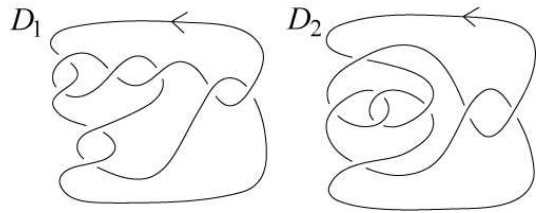}
\caption{Minimal diagrams $D_1$ and $D_2$ of $8_{12}$. }
\label{8-12}
\end{center}
\end{figure}
\label{ex-8-12}
\end{example}
\phantom{x}

\noindent We define the {\it minimal warping degree}, $md(K)$, of a knot $K$ to be the minimal value of the warping degree, $d(D)$, for all minimal diagrams $D$ of $K$ with all possible orientations. 
Note that the minimal warping degree $md(K)$ and the warping sum $e(K)$ are computable for prime knots $K$ with up to $12$ crossings by checking all the diagrams with up to $12$ crossings using {\it LinKnot} (\cite{JS}). 
By definition, we have the inequality $u(K) \le a(K) \le md(K)$ for the unknotting number $u(K)$ and the ascending number $a(K)$ of a knot $K$ since $a(K)$ is the minimal value of the warping degree for all diagrams of $K$, not only minimal diagrams. 
Knots with ascending number one are determined as follows: 

\phantom{x}
\begin{theorem}[Ozawa, \cite{MO}]
A knot $K$ has ascending number one if and only if $K$ is a twist knot. 
\label{ozawa-thm}
\end{theorem}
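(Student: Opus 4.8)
This statement is due to Ozawa, and I would not expect a one-line argument; here is the strategy I would follow. Recall that $a(K)=1$ means that $K$ is nontrivial and admits a diagram $D$, an orientation, and a base point such that, traversing $D$ from that point, exactly one crossing $c$ is met on its under-arc before its over-arc.

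For the direction ``$K$ a twist knot $\Rightarrow a(K)=1$'' I would exhibit an explicit diagram: a twist knot admits a diagram in which switching a single clasp crossing produces a descending (hence trivial) diagram, so that clasp crossing is the unique under-first crossing of the original diagram, giving $d(D)\le 1$, and hence $a(K)=1$ since twist knots are nontrivial. For $3_1$ and $4_1$ the standard $3$- and $4$-crossing alternating diagrams already have this property: the trefoil read from a suitable point has Gauss code $O1\,U2\,O3\,U1\,O2\,U3$, whose only under-first crossing is the second one, and the figure-eight works similarly after choosing the correct orientation. In general one may use the diagram of a twist knot as a double of the unknot, where the role of $c$ is played by a crossing of the clasp.

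For the converse, suppose $a(K)=1$, witnessed by $D$, an orientation, a base point, and the unique under-first crossing $c$. Reading $D$ from the base point splits $K$ into three consecutive arcs: $A$ up to the first pass of $c$, $B$ between the two passes of $c$, and $C$ after the second pass; every crossing other than $c$ respects the order $A<B<C$. Hence switching $c$ makes $D$ descending, so the resulting diagram $D'$ is a diagram of the trivial knot, stacked in layers with $A$ over $B$ over $C$. The plan is then to exploit this layered structure. Taking $D$ to have the fewest crossings among all witnesses, so that no nugatory crossing and no Reidemeister-I or -II simplification is present, I would use the monotone (descending) lift to isotope the arcs $A$, $B$, $C$ into unknotted, taut position, and argue that the only complexity that can survive is that of the arc between the passes of $c$ winding around the rest together with the single crossing at $c$. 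This forces $K$ to be the numerator closure of a rational tangle consisting of one long twist region completed by a bounded clasp, that is, a twist knot (the trefoil and figure-eight being the short cases); since $a(K)=1$ excludes the trivial knot, $K$ is a twist knot. Equivalently, once the descending part is pulled taut, the diagram presents $K$ as the unknot carrying a single self-clasp, and all such knots are twist knots.

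The main obstacle is the last step of the converse: controlling the descending part of the diagram precisely enough to conclude that nothing more complicated than a twist knot can arise — in particular ruling out that the twist region acquires extra essential self-crossings, or that the completing clasp is of the ``wrong'', torus-knot-producing, type. This is where the minimality of the witnessing diagram, a careful isotopy argument, and the tangle arithmetic are needed, and where the degenerate configurations (collapse to the unknot, or to $3_1$ or $4_1$) must be treated on their own. By contrast, the easy direction and the reduction of the converse to ``descending away from a single crossing'' are routine.
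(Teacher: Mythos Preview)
The paper does not prove this statement; it is quoted as Ozawa's theorem with a bare citation to \cite{MO}, so there is no proof in the paper to compare your proposal against.

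As for the proposal itself: the forward direction is correct and is indeed the easy half. For the converse you have identified the right starting point---the decomposition into arcs $A$, $B$, $C$ with $A$ over $B$ over $C$ after switching the unique bad crossing $c$---but the step ``isotope the descending part into taut position and argue that only a twist knot can survive'' is exactly where all the content lies, and as written it is a hope rather than an argument. Minimality of the witnessing diagram does not by itself prevent the arc $B$ from linking nontrivially with $A\cup C$ in ways more complicated than a single twist region, and ``tangle arithmetic'' only applies once you have already reduced the picture to a rational tangle, which is the conclusion you are trying to reach. Ozawa's actual proof in \cite{MO} is more three-dimensional: from the ascending structure away from $c$ one obtains an embedded disk bounded by the knot-minus-a-small-arc, and the interaction of that disk with the remaining arc near $c$ is analysed geometrically to force the twist-knot form. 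Your diagrammatic strategy is not obviously wrong, but to make it rigorous you would essentially have to reprove that disk-based control in combinatorial language, and you have not indicated how to do so.
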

\phantom{x}

\noindent A {\it twist knot} is a knot whose Conway notation is ``$2 \ n$'' or ``$-2 \ -n$'' (where $n$ is a positive integer). 
In this paper, we consider twist knots to be only those twist knots with described by the positive integers ``$2 \ n$'' without loss of generality. 
We have the following: 

\phantom{x}
\begin{theorem}
A knot $K$ has minimal warping degree one if and only if $K=3_1$ or $4_1$. 
\label{31-41}
\end{theorem}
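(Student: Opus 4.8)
The plan is to prove the two implications separately. For ``$\Leftarrow$'', note that $3_1$ and $4_1$ are nontrivial prime alternating knots, so by Theorem~\ref{ec-thm} and the remark after it every minimal diagram $D$ of such a knot $K$ is a reduced alternating diagram with $e(D)=d(D)+d(-D)=c(K)-1$. Since $K$ is nontrivial, $d(D)\ge1$ and $d(-D)\ge1$; thus $c(3_1)=3$ forces $d(D)=d(-D)=1$ for every minimal diagram of $3_1$, and $c(4_1)=4$ forces $\{d(D),d(-D)\}=\{1,2\}$. Either way some orientation of some minimal diagram has warping degree $1$, and no diagram of a nontrivial knot has warping degree $0$, so $md(3_1)=md(4_1)=1$.

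For ``$\Rightarrow$'', suppose $md(K)=1$. Then $K$ is nontrivial, and $1\le a(K)\le md(K)=1$ gives $a(K)=1$, so by Theorem~\ref{ozawa-thm} $K$ is a twist knot, say $K=C(2,n)$ with $n\ge1$. Since $C(2,1)=3_1$ and $C(2,2)=4_1$, it remains to show $md(C(2,n))\ge2$ for $n\ge3$. Fix such an $n$ and assume for contradiction that some minimal diagram $D$ of $C(2,n)$, with some orientation, has $d(D)=1$. Then $D$ is a reduced alternating diagram with $c(D)=n+2\ge5$, whose twist regions and bigons are described by the classification of minimal diagrams of $2$-bridge knots: up to flypes $D$ has twist regions of sizes $2$ and $n$, hence exactly $n$ bigons. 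By definition of $d(D)=1$, changing a single crossing $p$ of $D$ yields a descending, hence trivial, diagram $E$ agreeing with $D$ away from $p$.

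The contradiction should rest on two facts. (i) Because $D$ is alternating, every bigon of $D$ is non-reducible --- one of its strands passes over one of its two crossings and under the other --- and flipping $p$ makes a bigon reducible exactly when that bigon contains $p$; so the non-reducible bigons of $E$ are precisely the bigons of $D$ not containing $p$. (ii) A descending diagram has at most one non-reducible bigon: for a non-reducible bigon $B$ with crossings $u$ and $v$, the descending condition forces the first passage through $u$ to lie on the strand of $B$ that is over at $u$, and the first passage through $v$ on the strand over at $v$; since the two strands of $B$ join their passages through $u$ and $v$ each by a single crossingless edge, reading off the cyclic order of these four passages contradicts ``first passage precedes second passage'' at $u$ or at $v$ unless the basepoint lies on an edge of $B$, and the basepoint lies on just one edge, which is in at most one bigon. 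Now if $p$ does not lie in the $n$-crossing twist region of $D$, then $p$ lies in at most one bigon, so by (i) the diagram $E$ has at least $n-1\ge2$ non-reducible bigons, contradicting (ii). If $p$ does lie in the $n$-crossing region, flip $p$ and apply the Reidemeister~II move made available by (i) at the bigon of $p$ with an adjacent crossing of that region: this deletes two crossings of the region and produces the standard reduced alternating diagram of $C(2,n-2)$, a nontrivial knot since $n\ge3$, which is nonetheless obtained from the trivial diagram $E$ by a Reidemeister~II move --- a contradiction. Hence $md(C(2,n))\ge2$ for all $n\ge3$, which completes the proof.

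I expect the main obstacles to be fact (ii), the ``at most one non-reducible bigon'' property of descending diagrams, and the invocation of the classification of minimal diagrams of $2$-bridge knots, which is what makes the bigons and twist regions of $D$ explicit (and lets one absorb the various flype variants and alternative continued-fraction forms of $C(2,n)$ into the single picture with twist regions $2$ and $n$); granting these, the case analysis and the Reidemeister~II reductions are routine.
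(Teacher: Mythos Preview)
Your argument is correct, but it takes a substantially different and more elaborate route than the paper's.

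Both proofs begin the same way: the ``$\Leftarrow$'' direction is immediate, and for ``$\Rightarrow$'' one uses $a(K)\le md(K)$ together with Ozawa's theorem to reduce to twist knots $C(2,n)$. From there the approaches diverge. The paper proceeds by a direct computation: Lemma~2.5 shows that each twist knot has a \emph{unique} minimal diagram, and Lemma~2.6 computes its warping degree pair explicitly, giving $md(C(2,n))=\lfloor (n+1)/2\rfloor$ (Corollary~2.7). The conclusion $md=1\iff n\in\{1,2\}$ is then a one-line check.

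Your route instead argues by contradiction for $n\ge3$, introducing the auxiliary fact (ii) that a descending knot diagram has at most one non-reducible bigon, and combining it with a bigon count and a Reidemeister~II reduction. This is sound --- in particular, your worry about the ``at most one bigon per edge'' step is unfounded for \emph{knot} diagrams, since two bigons sharing an edge would force a separate $e_1\cup e_2$ component --- and your case analysis goes through. Note, though, that the classification of minimal diagrams of $2$-bridge knots is heavier machinery than needed: the paper's Lemma~2.5 already gives uniqueness of the minimal diagram of $C(2,n)$, which pins down the twist regions and bigons without any flype discussion.

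In short: the paper's method is shorter and yields the exact value of $md$ for every twist knot, not merely the bound $md\ge2$; your method is longer and gives only the bound, but fact~(ii) about non-reducible bigons in descending diagrams is a pleasant observation of potentially independent interest.
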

\phantom{x}

\noindent To prove Theorem \ref{31-41}, we prepare the following lemmas: 

\phantom{x}
\begin{lemma}
Each twist knot has a unique minimal diagram. 
\end{lemma}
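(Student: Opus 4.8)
The statement to prove is: \emph{Each twist knot has a unique minimal diagram.} Here a twist knot is the knot with Conway notation ``$2\ n$'' for a positive integer $n$, so it is a rational (two-bridge) knot with fraction $2n+1$ over $2$, i.e.\ $C(2,n)$. The plan is to use the classification of minimal diagrams of rational knots. A rational knot is alternating, so by the Tait flyping conjecture (proved by Menasco and Thistlethwaite) any two reduced alternating diagrams of it are related by a sequence of flypes, and every minimal diagram of an alternating knot is a reduced alternating diagram. Hence it suffices to show that the standard alternating diagram of the twist knot $C(2,n)$ admits no nontrivial flype — more precisely, that every flype carries the diagram to an isotopic (on $S^2$) copy of itself, so that the flype equivalence class consists of a single diagram up to ambient isotopy of $S^2$.

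The key steps, in order, would be: (1) recall that minimal diagrams of an alternating knot are exactly the reduced alternating diagrams, and invoke flyping to reduce to analyzing flypes on the standard diagram $D_0$ of $C(2,n)$; (2) describe $D_0$ explicitly as a twist region of $2$ crossings plugged into a twist region of $n$ crossings, and identify all its flype circles — a flype decomposes $D_0$ into a tangle $T$ inside a disk meeting the diagram in $4$ points, which is rotated by $\pi$; (3) enumerate the possible tangles $T$: because $D_0$ has the very simple structure of two adjacent twist regions, the only tangles that can be excised by a flype circle are themselves (sub)twist regions, and rotating a twist region by $\pi$ returns an isotopic picture on $S^2$; (4) conclude that the flype equivalence class of $D_0$ has a single element up to ambient isotopy of $S^2$, hence $C(2,n)$ has a unique minimal diagram. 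One should also note the small exceptional case $n=1$, where $C(2,1)=3_1$ and the claim is immediate, and $n=2$, where $C(2,2)=4_1$, likewise immediate; for $n\ge 3$ the twist knot is prime alternating with crossing number $n+2$ and the flype argument is the substantive part.

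I expect the main obstacle to be step (3): carefully arguing that \emph{no} flype circle of $D_0$ yields a genuinely different diagram. One has to be sure to catch every flype circle, including those that could in principle "reach across" from the $2$-twist region to the $n$-twist region, and show each such circle either bounds a trivial tangle or a twist region invariant (up to $S^2$-isotopy) under the $\pi$-rotation. A clean way to do this is to work with the standard plumbing/pretzel picture of $C(2,n)$ and observe that the visible flypes are exactly the ones permuting crossings within a single twist region, which act trivially on the diagram up to isotopy on $S^2$; one then argues by the structure of rational tangles (a flype of a rational diagram corresponds to a continued-fraction identity, and for $[n,2]$ there are no nontrivial such identities) that there are no further flypes. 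Making "no further flypes" rigorous — rather than merely checking the obvious ones — is the delicate point, and is most safely handled by citing the known description of flypes of two-bridge diagrams rather than re-deriving it.
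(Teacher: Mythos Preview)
Your approach is sound but far more elaborate than what the paper does. The paper simply cites a known fact: links with Conway notation ``$p\ q$'' (with $p,q$ positive integers) have a unique minimal diagram on $S^2$, referring to \cite{AS-rcc}; since a twist knot has Conway notation ``$2\ n$'', the lemma follows immediately. Your proposal instead unpacks the content behind that citation --- invoking the Tait flyping theorem of Menasco--Thistlethwaite, enumerating the flype circles of the standard diagram of $C(2,n)$, and arguing each flype acts trivially up to $S^2$-isotopy. This is essentially a sketch of how one would \emph{prove} the cited result for this special family, and you correctly identify the delicate point (ruling out non-obvious flypes). What you gain is a self-contained argument; what the paper gains is brevity by outsourcing the work to a reference. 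There is no genuine gap in your plan, though as you yourself note, making ``no further flypes'' fully rigorous without simply citing the two-bridge classification would require some care.
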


\begin{proof}
It is known that links with Conway notation ``$p \ q$'' ($p$ and $q$ are positive integers) have only one minimal diagram on $S^2$ (see, for example, \cite{AS-rcc}). 
Hence any twist knot has only one minimal diagram. 
\end{proof}

\phantom{x}
\begin{lemma}
Let $n$ be a positive integer, and let $D$ be a knot diagram with Conway notation ``$2 \ n$''. 
We have $d(D, -D)=\left( \frac{n+1}{2}, \frac{n+1}{2} \right)$ if $n$ is odd, and 
$d(D, -D)= \left( \frac{n}{2}, \frac{n+2}{2} \right)$ or $\left( \frac{n+2}{2}, \frac{n}{2} \right)$ if $n$ is even. 
\end{lemma}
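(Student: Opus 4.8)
The plan is to reduce the statement to an upper bound on each of $d(D)$ and $d(-D)$, and then to prove that bound by a direct count on the diagram. Note first that $D$, the diagram with Conway notation ``$2\ n$'', is reduced alternating and has $c(D)=n+2$ crossings (it is the unique minimal diagram of its twist knot, by the preceding lemma). For a base point $b$ on $D$, let $d(D;b)$ be the number of crossings whose under-arc is met before its over-arc when one travels along $D$ from $b$; changing exactly these crossings yields a diagram monotone from $b$, and conversely, if a monotone diagram is obtained from $D$ by changing a set $S$ of crossings then $|S|=d(D;b)$ for the base point $b$ witnessing its monotonicity, so $d(D)=\min_b d(D;b)$. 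As $b$ moves forward past one crossing-encounter, $d(D;b)$ changes by $+1$ if that encounter is an over-encounter and by $-1$ if it is an under-encounter. Since $D$ is alternating, the $2(n+2)$ encounters alternate over, under, over, under, $\dots$ around $D$, so these steps alternate $+1,-1,+1,-1,\dots$; hence $d(D;b)$ takes exactly the two consecutive values $m:=\min_b d(D;b)$ and $m+1$, and $d(D)=m$. For a fixed $b$, reversing the orientation reverses the order in which the encounters are met, so a crossing is warping for $-D$ from $b$ precisely when it is not warping for $D$ from $b$; thus $d(-D;b)=(n+2)-d(D;b)$ and $d(-D)=(n+2)-\max_b d(D;b)=n+1-m$. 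Consequently $d(D,-D)=(m,\,n+1-m)$ with $d(D)+d(-D)=n+1$ (as also follows from the discussion after Theorem~\ref{ec-thm}), so it suffices to prove $d(D)\le\lceil(n+1)/2\rceil$ and $d(-D)\le\lceil(n+1)/2\rceil$: for $n$ odd this forces $d(D)=d(-D)=\tfrac{n+1}{2}$, and for $n$ even, $n+1$ being odd, it forces $\{d(D),d(-D)\}=\{\tfrac n2,\tfrac{n+2}{2}\}$.

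It remains to prove the upper bound for $d(D)$; the argument for $-D$ is identical. The diagram $D$ is built from an $n$-crossing twist region $R$ and a $2$-crossing clasp $C$. A strand cannot reverse direction inside a twist region, so the knot meets $R$ in exactly two arcs, each running through all $n$ crossings of $R$, and meets $C$ in exactly two arcs, each running through both crossings of $C$; along each of these four arcs the over/under pattern alternates, so one arc of $R$ carries $\lceil n/2\rceil$ over-encounters and the other $\lfloor n/2\rfloor$, while each arc of $C$ carries exactly one. For any base point $b$, a crossing lying in $R$ or in $C$ is warping exactly when its over-encounter lies on whichever of that region's two arcs comes later in the linear order from $b$ (the two arcs are intervals in that order); hence the number of warping crossings in a region equals the number of over-encounters on the later of its two arcs. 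Choose $b$ just before the arc of $R$ that carries $\lceil n/2\rceil$ over-encounters. Then that arc is first among the four arcs, so the later arc of $R$ carries $\lfloor n/2\rfloor$ over-encounters and the later arc of $C$ carries one, giving $d(D)\le d(D;b)=\lfloor n/2\rfloor+1=\lceil(n+1)/2\rceil$. Applying the same reasoning to $-D$ yields $d(-D)\le\lceil(n+1)/2\rceil$, which together with the reduction above proves the lemma.

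The main obstacle lies in making this second step precise: one must fix the reduced alternating diagram of ``$2\ n$'' explicitly enough to see that the knot really meets $R$ and $C$ each in two full arcs, that those arcs carry the stated numbers of over-encounters, and that the chosen base point does put the ``$\lceil n/2\rceil$-arc'' first among the four arcs no matter how they are cyclically arranged and no matter the parity of $n$ (the Gauss sequence of $D$ is mildly parity-dependent). An equivalent route is to write down the short, highly structured Gauss sequence of $D$ directly and compute $\min_b d(D;b)$ for both orientations by hand; the bookkeeping involved is the same.
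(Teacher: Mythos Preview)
Your argument is correct and, at its core, uses the same idea as the paper: pick a base point on the alternating diagram and count the crossings met under-first. The paper does this in one sentence by pointing to a figure, whereas you first use the alternating identity $d(D)+d(-D)=c(D)-1=n+1$ to reduce the exact statement to the one-sided bound $d(\pm D)\le\lceil(n+1)/2\rceil$, and then obtain that bound by a structural count (over-encounters on the later arc of each twist region) rather than by inspection of a picture. This restructuring is sound---your key claim that the warping crossings in a twist region are exactly those whose over-encounter lies on the later of its two arcs is correct, and the arc counts $\lceil n/2\rceil,\lfloor n/2\rfloor,1,1$ are right---so the concerns you raise in your final paragraph are not genuine obstacles: the base-point choice works regardless of the cyclic order of the four arcs, since placing $b$ at an arc boundary makes all four arcs honest intervals and both $C$-arcs carry exactly one over-encounter anyway. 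In short, same method, more explicitly justified.
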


\begin{proof}
Since $D$ is alternating, we can obtain the warping degree with an orientation by starting at any point just before an over-crossing and counting the number of crossings that we pass under the first time we meet them as we travel around $D$ (\cite{AS-k}). 
For example, see Figure \ref{2n-twist}. 
\begin{figure}[ht]
\begin{center}
\includegraphics[width=40mm]{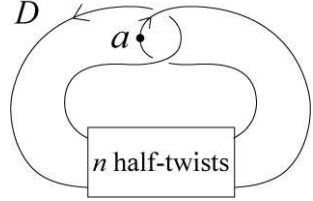}
\caption{For the case $n$ is odd and the orientation shown by the arrow, by taking the start point $a$ we meet $\frac{n+1}{2}$ crossing points as an under-crossing first in the $n$ half-twists. }
\label{2n-twist}
\end{center}
\end{figure}
\end{proof}
\phantom{x}

\noindent We have the following corollary: 

\phantom{x}
\begin{corollary}
Let $K$ be a twist knot with Conway notation ``$2 \ n$''. 
Then the minimal warping degree $md(K)$ equals $\lfloor \frac{n+1}{2} \rfloor .$
\label{cor-twist}
\end{corollary}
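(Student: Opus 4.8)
The plan is to deduce this immediately from the two preceding lemmas. First I would invoke the lemma asserting that each twist knot has a unique minimal diagram: the knot $K$ with Conway notation ``$2\ n$'' therefore has exactly one minimal diagram $D$ on $S^2$ up to ambient isotopy. Since $md(K)$ is by definition the minimum of $d(\cdot)$ over all minimal diagrams of $K$ equipped with all possible orientations, and since a knot admits only two orientations, this reduces to
$$md(K)=\min\{\,d(D),\,d(-D)\,\},$$
where $D$ and $-D$ are the two orientations of the single unoriented minimal diagram.

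Next I would apply the lemma computing $d(D,-D)$ for a diagram with Conway notation ``$2\ n$''. If $n$ is odd, that lemma gives $d(D)=d(-D)=\frac{n+1}{2}$, so $md(K)=\frac{n+1}{2}$; and because $n+1$ is even this coincides with $\lfloor\frac{n+1}{2}\rfloor$. If $n$ is even, the lemma gives $\{d(D),d(-D)\}=\{\frac{n}{2},\frac{n+2}{2}\}$, so $md(K)=\frac{n}{2}$; and because $n+1$ is odd, $\lfloor\frac{n+1}{2}\rfloor=\frac{n}{2}$ as well. In both parities we obtain $md(K)=\lfloor\frac{n+1}{2}\rfloor$, as claimed.

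There is essentially no obstacle here: the content is entirely carried by the two lemmas, and the only thing to verify is the elementary floor-function identity in each of the two parity cases. The one point worth stating explicitly is that the uniqueness of the minimal diagram is what legitimises restricting the minimum defining $md(K)$ to the single pair $\{D,-D\}$; without it one would in principle have to compare warping degrees across infinitely many minimal diagrams.
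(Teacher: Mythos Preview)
Your proof is correct and is exactly the argument the paper intends: the corollary is stated in the paper without proof, as an immediate consequence of the two preceding lemmas (uniqueness of the minimal diagram of a twist knot, and the explicit computation of $d(D,-D)$ for the diagram with Conway notation ``$2\ n$''). You have simply made explicit the two-line deduction and the floor identity that the paper leaves to the reader.
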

\phantom{x}

\noindent Note that the warping sum $e(K)$ equals $c(K)-1=n+1$ for twist knots $K$ with Conway notation ``$2 \ n$'' because $K$ is a prime alternating knot. 
Now we prove Theorem \ref{31-41}. \\

\phantom{x} 
\noindent {\it Proof of Theorem \ref{31-41}}. \ 
Let $K$ be a knot with $md(K)=1$. 
$K$ is not the trivial knot because $md(K)$ is not zero. 
Then we also have $a(K)=1$ from $a(K) \leq md(K)$. 
By Theorem \ref{ozawa-thm}, $K$ must be a twist knot. 
A twist knot $K$ with Conway notation ``$2 \ n$'' has $md(K)=1$ if and only if $n=1$ or $2$ by Corollary \ref{cor-twist}, that is, $K=3_1$ or $4_1$. 
\hfill$\square$

\phantom{x}

\noindent By Theorem \ref{31-41}, we can determine that the minimal warping degree $md(K)$ equals $2$ for some knots $K$. 
For example, we have $md(7_6)=md(8_{12})=2$ by Examples \ref{ex-7-6} and \ref{ex-8-12}. 
We show the following theorem:

\phantom{x}
\begin{theorem} 
Let $K$ be a knot. 
Then the small values for the warping sum $e(K)$ are determined as follows. \\
\noindent (0): $e(K)=0$ if and only if $K$ is the trivial knot. \\
\noindent (1): There are no knots $K$ with $e(K)=1$. \\
\noindent (2): $e(K)=2$ if and only if $K$ is the $3_1$ knot. \\
\noindent (3): $e(K)=3$ if and only if $K$ is the $4_1$ knot. \\
\label{e-thm}
\end{theorem}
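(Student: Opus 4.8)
The plan is to take a minimal diagram $D$ of $K$ with $e(D)=e(K)$, analyse the partition $e(K)=d(D)+d(-D)$ into two nonnegative integers, and feed the result into Theorems \ref{ec-thm} and \ref{31-41}. The only background fact I need beyond the excerpt is that a knot diagram with warping degree $0$ is monotone and hence represents the trivial knot; equivalently, if $d(D')=0$ for some diagram $D'$ of a knot $K$, then $a(K)=0$, so $u(K)=0$ and $K$ is trivial, using the chain $u(K)\le a(K)\le md(K)$. For part (0): the trivial knot has crossing number $0$, so its unique minimal diagram is the crossingless circle, which has warping degree $0$ for either orientation, giving $e(K)=0$; conversely $e(K)=0$ forces a minimal diagram $D$ with $d(D)=0$, whence $K$ is trivial by the fact just stated.

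For parts (1)--(3) we may assume $K$ is nontrivial, since the trivial knot has $e=0$. Then every diagram $D'$ of $K$ satisfies $d(D')\ge a(K)\ge u(K)\ge 1$, so in particular a minimal diagram $D$ realizing $e(K)$ has $d(D)\ge 1$ and $d(-D)\ge 1$, hence $e(D)\ge 2$; this gives part (1). If $e(K)=2$ then $d(D)=d(-D)=1$, and if $e(K)=3$ then $\{d(D),d(-D)\}=\{1,2\}$, so in both cases some orientation of the minimal diagram $D$ has warping degree $1$. Therefore $md(K)\le 1$, while $md(K)\ge a(K)\ge 1$, so $md(K)=1$, and Theorem \ref{31-41} gives $K=3_1$ or $K=4_1$.

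To conclude, $3_1$ and $4_1$ are prime alternating knots with crossing numbers $3$ and $4$, so Theorem \ref{ec-thm} gives $e(3_1)=2$ and $e(4_1)=3$; comparing with the assumed value of $e(K)$ selects $K=3_1$ in part (2) and $K=4_1$ in part (3), and these two computations also establish the converse implications. I do not anticipate a real obstacle: once Theorems \ref{ec-thm} and \ref{31-41} are in hand the argument is essentially a case check on partitions of an integer at most $3$. The only point demanding care is the reduction step excluding a vanishing warping degree, where one must be certain that $d(D)=0$ forces triviality of the underlying knot, and that the minimum defining $e(K)$ is attained, which holds since a knot has only finitely many minimal diagrams up to ambient isotopy of $S^2$.
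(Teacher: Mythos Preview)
Your argument is correct and follows essentially the same route as the paper: analyse the partitions of $e(K)$ into $d(D)+d(-D)$, rule out any summand $0$ via triviality, and invoke Theorem~\ref{31-41} when a summand equals $1$. The only cosmetic difference is that you compute $e(3_1)=2$ and $e(4_1)=3$ by appealing to Theorem~\ref{ec-thm}, whereas the paper reads these values off the minimal diagrams directly.
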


\begin{proof}
(0): If a knot $K$ has a diagram $D$ with the warping degree $d(D)$ equals $0$, i.e., $K$ has a monotone diagram, then $K$ is the trivial knot. 
(1): If a knot diagram $D$ has the warping sum $e(D)=1$, then $d(D, -D)=(0,1)$ or $(1,0)$. 
This means $D$ is a diagram of the trivial knot, which has $e(K)=0$. 
(2): If $e(D)=2$, then $d(D, -D)=(0,2), (1,1)$ or $(2,0)$. 
For the case $(0,2)$ or $(2,0)$, $D$ represents the trivial knot, which has $e(K)=0$. 
For the case $(1,1)$, $D$ represents $3_1$ or $4_1$ if $D$ is a minimal diagram by Theorem \ref{31-41}. 
For the minimal diagram $D$ of $3_1$, we have $d(D, -D)=(1,1)$. 
For the minimal diagram $D$ of $4_1$, we have $d(D, -D)=(1,2)$ or $(2,1)$. 
Hence only $3_1$ has $e(K)=2$. 
(3): If $e(D)=3$, then $d(D, -D)=(0,3), (1,2), (2,1)$ or $(3,0)$. 
Similarly to (2), we can see that only $4_1$ has $e(K)=3$. 
\end{proof} 
\phantom{x}

\noindent We have the following corollary: 

\phantom{x}
\begin{corollary}
Let $K$ be a knot. 
If the value of the warping sum, $e(K)$, is $4$ or $5$, then the minimal warping degree, $md(K)$, equals $2$. 
\end{corollary}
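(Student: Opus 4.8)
The plan is to prove the two inequalities $md(K) \ge 2$ and $md(K) \le 2$ separately, using Theorems \ref{31-41} and \ref{e-thm} together with the definitions of $e$ and $md$.

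For the lower bound $md(K) \ge 2$, I would argue as follows. Since $e(K) \in \{4,5\}$, in particular $e(K) \ne 0$, so by part (0) of Theorem \ref{e-thm} the knot $K$ is nontrivial. A nontrivial knot has no monotone diagram, so in particular no monotone minimal diagram, whence $md(K) \ge 1$. Moreover $e(K) \ne 2$ and $e(K) \ne 3$, so by parts (2) and (3) of Theorem \ref{e-thm} we have $K \ne 3_1$ and $K \ne 4_1$. Theorem \ref{31-41} then forces $md(K) \ne 1$, and combining with $md(K) \ge 1$ gives $md(K) \ge 2$.

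For the upper bound $md(K) \le 2$, I would pick a minimal diagram $D$ of $K$ attaining the warping sum, so $d(D) + d(-D) = e(D) = e(K) \le 5$. Then $\min\{d(D), d(-D)\} \le \lfloor 5/2 \rfloor = 2$. Since $md(K)$ is by definition the minimum of the warping degree taken over all minimal diagrams of $K$ with all orientations, it is in particular $\le \min\{d(D), d(-D)\} \le 2$. Together with the lower bound this yields $md(K) = 2$, as claimed.

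I do not expect any step to be a genuine obstacle; this is essentially a bookkeeping argument chaining the earlier results. The only point that needs a little care is the order of the steps in the lower bound: one must first rule out $md(K) = 0$ (via $e(K) \ne 0$ and the characterization of the trivial knot) before applying Theorem \ref{31-41} to rule out $md(K) = 1$, since Theorem \ref{31-41} only addresses the value $md(K) = 1$.
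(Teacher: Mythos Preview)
Your proposal is correct and follows essentially the same approach as the paper: both arguments use Theorem~\ref{e-thm} to exclude the trivial knot, $3_1$, and $4_1$, then invoke Theorem~\ref{31-41} (together with the fact that monotone diagrams represent the unknot) to force $md(K)\ge 2$, and finally use a minimal diagram realizing $e(K)\le 5$ to obtain $md(K)\le 2$. The paper phrases the last step by enumerating the possible pairs $d(D,-D)\in\{(2,2),(2,3),(3,2)\}$ rather than your $\min\{d(D),d(-D)\}\le\lfloor 5/2\rfloor$, but this is purely cosmetic.
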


\begin{proof}
Let $D$ be a minimal diagram of a knot $K$ with $e(D)=e(K)=4$ or $5$. 
Then $K$ must be neither the trivial knot, $3_1$ nor $4_1$ by Theorem \ref{e-thm}, and the warping degree of any minimal diagram of $K$ must be neither $0$ nor $1$ by Theorem \ref{31-41}. 
Hence $d(D, -D)$ is $(2,2)$, $(2,3)$ or $(3,2)$, and therefore $md(K)=2$. 
\end{proof}
\phantom{x}

\noindent Since a prime alternating knot $K$ has always the relation between the warping sum $e(K)$ and the crossing number $c(K)$ that $e(K)=c(K)-1$, we have $e(5_1)=e(5_2)=4$. 
In the following example, we give two knots which are non-alternating or non-prime with $e(K)=4$. 

\begin{example}
The non-alternating knot $8_{21}$ has $e(8_{21})=4$. 
The Granny knot $G$, a non-prime alternating knot, has $e(G)=4$. 
These values of the warping sum are realized by the minimal diagrams shown in Figure \ref{8-21}. 
\begin{figure}[ht]
\begin{center}
\includegraphics[width=70mm]{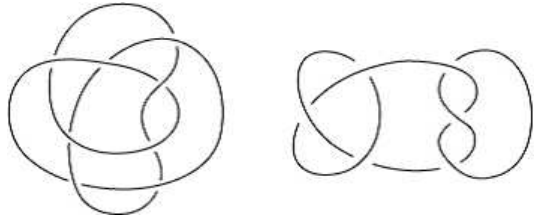}
\caption{The minimal diagrams of $8_{21}$ (left) and Granny knot (right) with $e=4$. }
\label{8-21}
\end{center}
\end{figure}
\end{example}

\section{The reduced warping sum $\hat{e}(K)$}

For a knot $K$, the warping sum $e(K)$ is defined to be the minimal value of the warping sum $e(D)$ for all minimal diagrams $D$ of $K$. 
By considering all diagrams $D$ including non-minimal diagrams, we might obtain a value of $e(D)$ (for a non-minimal diagram $D$ of $K$) that is smaller than $e(K)$. 
For example, the knot $6_3$ has $e(6_3)=5$, and it has a non-minimal diagram $D$ with $e(D)=4$ (see Figure \ref{6-3}). 
\begin{figure}[ht]
\begin{center}
\includegraphics[width=40mm]{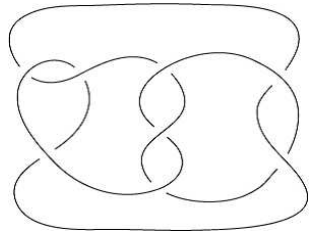}
\caption{The knot $6_3$ has a non-minimal diagram $D$ with $e(D)=4$. }
\label{6-3}
\end{center}
\end{figure}

\noindent We define the {\it reduced warping sum}, $\hat{e}(K)$, of a knot $K$ to be the minimum value of warping sum $e(D)$ over all possible diagrams $D$ of $K$. 
We show the following theorem: 

\phantom{x}
\begin{theorem}
Let $K$ be a knot. 
Then the small values for the reduced warping sum $\hat{e}(K)$ are determined as follows. \\
(0) $\hat{e}(K)=0$ if and only if $K$ is the trivial knot. \\
(1) There are no knots $K$ with $\hat{e}(K)=1$. \\
(2) $\hat{e}(K)=2$ if and only if $K$ is a twist knot. \\
(3) There are no knots $K$ with $\hat{e}(K)=3$.
\label{e-hat-thm}
\end{theorem}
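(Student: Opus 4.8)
The plan is to treat the four statements in the order listed, reducing each of (1), (2)$\Rightarrow$, and (3) to Ozawa's theorem (Theorem~\ref{ozawa-thm}) together with cases (0),(1) of Theorem~\ref{e-thm}, so that the only substantial work is the ``if'' half of (2). Cases (0) and (1) are verbatim the corresponding cases of Theorem~\ref{e-thm}, whose proofs never used minimality of the diagram: $\hat e(K)=0$ means some diagram of $K$ is monotone, hence $K$ is trivial, and conversely; and if $\hat e(K)=1$ then some diagram $D$ of $K$ has $d(D,-D)=(0,1)$ or $(1,0)$, so $D$ is a monotone diagram of $K$, forcing $K$ trivial and $\hat e(K)=0$, a contradiction. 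For the forward direction of (2): if $\hat e(K)=2$ is realized by a diagram $D$ with $e(D)=2$, then $d(D,-D)$ is $(0,2)$, $(1,1)$ or $(2,0)$; the first and last make $D$ monotone, hence $K$ trivial with $\hat e(K)=0$, a contradiction, so $d(D,-D)=(1,1)$. Then $K$ has a diagram of warping degree $1$, so the ascending number satisfies $a(K)\le 1$; since $K$ is nontrivial, $a(K)=1$, and by Theorem~\ref{ozawa-thm} the knot $K$ is a twist knot.

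The converse of (2) is the heart of the proof. Let $K$ be the twist knot with Conway notation ``$2\ n$''. Since $K$ is nontrivial, (0) and (1) give $\hat e(K)\ge 2$, so it suffices to exhibit one diagram $D$ of $K$ with $d(D)=d(-D)=1$. For $n=1$ the minimal diagram of $3_1$ already does this, with $d(D,-D)=(1,1)$. For $n\ge 2$ no minimal diagram can, because $K$ is prime alternating and so every minimal diagram has $e=c(K)-1=n+1>2$; as with the $6_3$ example, a non-minimal diagram is needed. I would construct such a diagram $D_n$ of $K$ in which, along the traversal of $K$, one long sub-arc passes over all but one of the crossings while the complementary sub-arc passes under all but one (equivalently, the over/under word of $D_n$ has the largest amplitude possible for a knotted diagram, i.e. the span of the warping polynomial equals $c(D_n)-2$, so $e(D_n)=2$). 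Granting such a family, the verification that $d(D_n)=d(-D_n)=1$ is the counting argument already used in the lemma on ``$2\ n$'' diagrams and in~\cite{AS-k}: for each orientation the warping degree is computed by starting just before an over-crossing and counting the crossings first met as under-crossings, minimized over start points; by construction this count is exactly $1$ for each of the two orientations, and it is never $0$ since $K$ is knotted.

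Case (3) then follows formally: if $\hat e(K)=3$ were realized by $D$ with $e(D)=3$, then $d(D,-D)\in\{(0,3),(1,2),(2,1),(3,0)\}$; the extreme cases make $D$ monotone, forcing $K$ trivial and $\hat e(K)=0$, while the middle cases give $K$ a diagram of warping degree $1$, so $a(K)\le 1$, whence $K$ is trivial ($\hat e=0$) or, by Theorem~\ref{ozawa-thm}, a twist knot, so $\hat e(K)=2$ by (2); either way $\hat e(K)\ne 3$, a contradiction. The main obstacle is precisely the construction in the converse of (2): producing, for every $n$, an explicit planar diagram of the ``$2\ n$'' twist knot whose over/under word attains this near-extremal amplitude, and checking both that the knot type is unchanged and that each of the two warping-degree counts equals $1$. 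Everything else is bookkeeping on top of Ozawa's theorem and cases (0),(1) of Theorem~\ref{e-thm}.
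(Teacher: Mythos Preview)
Your approach matches the paper's exactly: cases (0), (1), the forward direction of (2), and (3) all proceed by the same bookkeeping on top of Ozawa's theorem that you describe. The one piece you flag as unresolved---the explicit family of diagrams $D_n$ of the twist knot ``$2\ n$'' with $d(D_n)=d(-D_n)=1$---is precisely what the paper supplies, not by a new construction but by pointing to Ozawa's own method in \cite{MO} (illustrated in Figure~\ref{twist-2}): the diagram Ozawa produces to show $a(K)=1$ already has warping degree $1$ with either orientation, so $e(D)=2$. Thus the ``main obstacle'' you identify is already handled in the very reference you invoke for Theorem~\ref{ozawa-thm}; nothing needs to be built from scratch.
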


\begin{proof}
(0) and (1): Similar to (0) and (1) of the proof of Theorem \ref{e-thm}. 
(2): If a diagram $D$ of a knot $K$ realizes $e(D)= \hat{e}(K)=2$, then $d(D, -D)$ should be $(1,1)$ since $K$ is not the trivial knot because $\hat{e}(K) \ne 0$. 
By Theorem \ref{ozawa-thm}, a non-trivial knot $K$ which has an oriented diagram $D$ with $d(D)=1$ is a twist knot. 
Using Ozawa's method in \cite{MO}, we can check that all twist knots have a diagram $D$ with $e(D)=2$ (see Figure \ref{twist-2}). 
\begin{figure}[ht]
\begin{center}
\includegraphics[width=110mm]{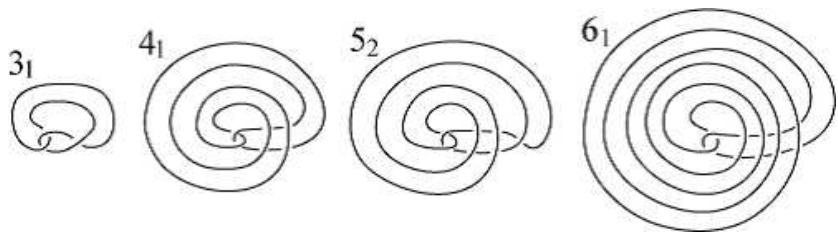}
\caption{Every twist knot has a diagram with warping degree equal to one, regardless of which orientation is chosen (Ozawa's method in \cite{MO}). }
\label{twist-2}
\end{center}
\end{figure}
(3): Since $3=0+3$ or $1+2$, a diagram $D$ with $e(D)=3$ represents the trivial knot or a twist knot. 
\end{proof}
\phantom{x}

\noindent By Theorem \ref{e-hat-thm}, we can conclude that $\hat{e}(6_3)=4$ (Figure \ref{6-3}).

\section*{Acknowledgment.} 
The second author thanks Allison Henrich for valuable discussion on e-mail and helping for editing the paper. 
She was partially supported by Grant for Basic Science Research Projects from The Sumitomo Foundation (160154).

\end{document}